\documentclass[a4paper,11pt]{article}
\usepackage{amsmath,amsthm,amsfonts,amssymb,color,enumerate,xfrac,tikz,comment,pgfplots}
\usepackage[colorlinks,citecolor=blue,urlcolor=blue]{hyperref}
\usetikzlibrary{shapes.geometric,positioning,matrix}
\excludecomment{codes}
\usepackage[text={6.5in,9.0in},centering]{geometry}

\usepackage{graphicx,url,subfig}
\RequirePackage[OT1]{fontenc}
\usepackage{datetime}
\usepackage[normalem]{ulem} 
\usepackage{soul} 
\makeatother
\numberwithin{equation}{section}
\allowdisplaybreaks



\usepackage{amssymb}
\usepackage{pifont}
%
%

\theoremstyle{plain}
\newtheorem{theorem}{Theorem}[section]
\newtheorem{corollary}[theorem]{Corollary}
\newtheorem{lemma}[theorem]{Lemma}

\theoremstyle{definition}

\newtheorem{remark}[theorem]{Remark}

\newtheorem{example}[theorem]{Example}

\newcommand{\E}{\mathbb{E}}

\newcommand{\ud}{\ensuremath{\mathrm{d} }}

\newcommand{\Norm}[1]{\left|\left|  #1   \right|\right|}

\newcommand{\R}{\mathbb{R}}

\DeclareMathOperator{\Lip}{\mathit{L}}

\usepackage[strict=true,style=english]{csquotes}
\usepackage[backend=biber,
            style=alphabetic,
            natbib=true,
            maxbibnames=99,
            sorting=nyt,
            abbreviate=true]{biblatex}
\AtEveryBibitem{
 \clearfield{doi}
 \clearfield{url}
 \clearfield{issn}
 \clearlist{location}
 \clearfield{month}
 \clearfield{series}

 \ifentrytype{book}{}{
  \clearlist{publisher}
  \clearname{editor}
 }
}
\addbibresource{Super_Linear_biber.bib}
\title{Superlinear stochastic heat equation on $\R^d$}
\author{
  Le Chen\footnote{Department of Mathematics and Statistics, Auburn University, Auburn, Alabama, USA. Email: \url{le.chen@auburn.edu}.}
  \and
  Jingyu Huang\footnote{School of Mathematics, University of Birmingham, Birmingham, UK. Email: \url{j.huang.4@bham.ac.uk}.}
}
\begin{document}
\maketitle

\begin{abstract}

  In this paper, we study the {\it stochastic heat equation} (SHE) on $\R^d$ subject to a centered
  Gaussian noise that is white in time and colored in space. We establish the existence and
  uniqueness of the random field solution in the presence of locally Lipschitz drift and diffusion
  coefficients, which can have certain superlinear growth. This is a nontrivial extension of the
  recent work by Dalang, Khoshnevisan and Zhang \cite{dalang.khoshnevisan.ea:19:global}, where the
  one-dimensional SHE on $[0,1]$ subject to space-time white noise has been studied.

  \bigskip
	\noindent{\it Keywords.} Global solution; Stochastic heat equation; Reaction-diffusion; Dalang's
  condition; superlinear growth.\\

	\noindent{\it \noindent }

\end{abstract}

\setlength{\parindent}{1.5em}

\section{Introduction}

In this paper, we study the following stochastic heat equation (SHE) on $\R^d$ with a drift term:
\begin{equation} \label{E:SHE}
  \begin{cases}
    \dfrac{\partial u(t,x)}{\partial t} = \dfrac{1}{2}\Delta u(t,x) + b\left(u(t,x)\right) +
    \sigma\left(u(t,x)\right)\dot{W}(t,x)\,, & t>0, \: x\in\R^d, \\[1em]
    u(0,\cdot) = u_0(\cdot),
  \end{cases}
\end{equation}
with both $b$ and $\sigma$ being locally Lipschitz continuous. The noise $\dot{W}$ is a centered
Gaussian noise which is white in time and colored in space with the following covariance structure
\begin{equation} \label{E:Noise}
  \E\left[\dot{W}(s,y)\dot{W}(t,x)\right] = \delta(t-s)f(x-y)\,.
\end{equation}
We assume that the correlation function $f$ in \eqref{E:Noise} satisfies the improved {\it Dalang's
condition}:
\begin{align} \label{E:Dalang+}
  \Upsilon_\alpha:= (2\pi)^{-d}\int_{\R^d} \frac{\hat{f}(\xi)\ud\xi}{(1+|\xi|^2)^{1-\alpha}} < \infty\,,
  \quad \text{for some $0 < \alpha < 1$,}
\end{align}
where $\hat{f}(\xi)$ is the Fourier transform of $f$, namely, $\hat{f}=\mathcal{F}f(\xi) =
\int_{\R^d} f(x) e^{-i x\cdot \xi} \ud x$. Recall that condition \eqref{E:Dalang+} with $\alpha=0$
refers to {\it Dalang's condition} \cite{dalang:99:extending}:
\begin{align}\label{E:Dalang}
  \Upsilon(\beta):=(2\pi)^{-d}\int_{\R^d} \frac{\hat{f}(\ud \xi)}{\beta+|\xi|^2}<+\infty \quad \text{for some and hence for all $\beta>0$.}
\end{align}
The case when $f=\delta_0$ refers to the space-time white noise. The solution to \eqref{E:SHE} is
understood in the mild formulation:
\begin{equation} \label{E:Mild}
  \begin{aligned}
    u(t,x) = (p_t*u_0)(x) & +\int_0^t \int_{\R^d} p_{t-s}(x-y)b(u(s,y))\ud y \ud s \\
                          & +\int_0^t \int_{\R^d} p_{t-s}(x-y)\sigma(u(s,y))W(\ud s, \ud y)\,,
  \end{aligned}
\end{equation}
where the stochastic integral is the {\it Walsh integral}
\cite{walsh:86:introduction,dalang.khoshnevisan.ea:09:minicourse},
$p_t(x)=\left(2\pi t\right)^{-d/2}\exp\left(-|x|^2/2t\right)$ is the heat kernel, and ``$*$" denotes
the convolution in the spatial variable. \bigskip

Motivated by the work of Fernandez Bonder and Groisman
\cite{fernandez-bonder.groisman:09:time-space}, Dalang, Khoshnevisan and Zhang
\cite{dalang.khoshnevisan.ea:19:global} established the {\it global solution with superlinear and
locally Lipschitz coefficients} for the one-dimensional SHE on $[0,1]$ subject to the space-time
white noise. In particular, they assumed that
\begin{align} \label{E:Davar}
  |b(z)| = O\left(|z|\log|z|\right) \quad \text{and} \quad
  |\sigma(z)| = o\left(|z|\left(\log|z|\right)^{1/4}\right).
\end{align}
Foondun and Nualart \cite{foondun.nualart:21:osgood} studied SHE with an additive noise, i.e.,
$\sigma(\cdot) \equiv \text{const.}$, and showed that the solution to \eqref{E:SHE} blows up in
finite time if and only if $b$ satisfies the {\it Osgood condition}:
\begin{align} \label{E:Osgood}
  \int_c^\infty \frac{1}{b(u)}\ud u<\infty \quad \text{for some $c>0$}.
\end{align}
Salins \cite{salins:21:global} studied this problem for SHE on a compact domain in $\R^d$ under the
following Osgood-type conditions, which are weaker than \eqref{E:Davar}: There exists a positive and
increasing function $h:[0,\infty)\to[0,\infty)$ that satisfies
\begin{align*}
  \int_c^\infty \frac{1}{h(u)} \ud u=\infty \quad \text{for all $c>0$}
\end{align*}
such that for some $\gamma\in (0,1/2)$ (which depends on the noise),
\begin{align} \label{E:Salins}
  |b(z)     | \le h\left( | z | \right) \quad  \text{for all $z \in\R$}, \quad \text{and} \quad
  |\sigma(z)| \le | z | ^{1-\gamma}\left( h\left(| z | \right)\right)^{\gamma} \quad  \text{for
  all $z >1$}.
\end{align}
Extending the above results to the SHE on the whole space $\R^d$ with both superlinear drift and
diffusion coefficients is a challenging problem due to the non-compactness of the spatial domain.
Indeed, for the wave equation on $\R^d$ ($d=1,2,3$), the compact support of the corresponding
fundamental solution can help circumvent this difficulty; see Millet and Sanz-Sol\'e
\cite{millet.sanz-sole:21:global}. The aim of this present paper is to carry out such extension by
proving the following theorem:

\begin{theorem} \label{T:Main}
  Assume the improved Dalang's condition \eqref{E:Dalang+} is satisfied for some $\alpha\in (0,1)$.
  Let $u(t,x)$ be the solution to \eqref{E:SHE} starting from $u_0\in L^\infty(\R^d)\cap L^p(\R^d)$
  for some $p> (d+2)/\alpha$. Suppose that $b$ and $\sigma$ are locally Lipschitz functions such
  that $b(0)=\sigma(0)=0$.
  \begin{enumerate}[(a)]
    \item (Global solution) If
    \begin{align} \label{E:subCrtcl}
      \max\left(\frac{|b(z)|}{\log|z|},\: \frac{\left|\sigma(z)\right|}{\left(\log |z|\right)^{\alpha/2}} \right)
      = o(|z|) \quad \text{as $|z|\to \infty$,}
    \end{align}
    then for any $T>0$, there is a unique solution $u(t,x)$ to \eqref{E:SHE} for all $(t,x)\in
    (0,T]\times\R^d$.
  \item (Local solution) If
    \begin{align} \label{E:Crtcl}
      \max\left(\frac{|b(z)|}{\log|z|},\: \frac{\left|\sigma(z)\right|}{\left(\log |z|\right)^{\alpha/2}} \right)
      = O(|z|) \quad \text{as $|z|\to \infty$,}
    \end{align}
    then for some deterministic time $T>0$, there exists a unique solution solution $u(t,x)$ to
    \eqref{E:SHE} for all $(t,x)\in (0,T]\times\R^d$.
  \item In either case (a) or (b), the solution $u(t,x)$ is H\"older continuous: $u\in C^{\alpha/2
    -,\, \alpha - }\left((0,T]\times\R^d\right)$ a.s., where $C^{\alpha_1 -, \,
    \alpha_2-}\left(D\right)$ denotes the H\"older continuous function on the space-time domain $D$
    with exponents $\alpha_1-\epsilon$ and $\alpha_2-\epsilon$ in time and space, respectively, for
    any small $\epsilon>0$.
  \end{enumerate}
\end{theorem}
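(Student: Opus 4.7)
\medskip

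\noindent\textbf{Proof proposal.}
The plan is a truncation-and-localization argument combined with a uniform-in-truncation moment estimate that captures the Osgood-type log growth in \eqref{E:subCrtcl}-\eqref{E:Crtcl}. For each $n\ge 1$, I would define globally Lipschitz truncations $b_n,\sigma_n\colon\R\to\R$ that coincide with $b,\sigma$ on $\{|z|\le n\}$, vanish at $0$ and grow at most linearly beyond $n$. Because \eqref{E:Dalang+} implies the ordinary Dalang condition \eqref{E:Dalang}, the standard Walsh--Dalang theory gives, for each $n$, a unique global mild solution $u_n$ to the truncated version of \eqref{E:Mild} with $u_0\in L^\infty(\R^d)\cap L^p(\R^d)$, and $u_n$ has finite $p$-th moments uniformly on compact time intervals. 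Introducing the stopping times
\[
  \tau_n:=\inf\bigl\{t>0\colon \sup_{x\in\R^d}|u_n(t,x)|\ge n\bigr\},
\]
pathwise uniqueness for the Lipschitz truncations forces $u_n=u_m$ on $[0,\tau_n\wedge\tau_m]\times\R^d$, so the family $\{u_n\}$ patches to a process $u$ defined on $[0,\tau_\infty)\times\R^d$ with $\tau_\infty:=\lim_n\tau_n$. Parts (a) and (b) are then equivalent to $\tau_\infty=\infty$ a.s., resp.\ $\tau_\infty\ge T$ a.s.\ for some deterministic $T>0$.

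The heart of the argument is a uniform-in-$n$ a priori estimate on $M_n(t):=\sup_{x\in\R^d}\E\bigl[|u_n(t\wedge\tau_n,x)|^p\bigr]$. Starting from the mild formulation and applying Minkowski's inequality to the drift term and the Burkholder--Davis--Gundy inequality together with the covariance calculation
\[
  \int_0^t\!\!\!\int_{\R^d}\!\!\!\int_{\R^d} p_{t-s}(x-y)p_{t-s}(x-z)f(y-z)\,\ud y\,\ud z\,\ud s
  \le C\,t^{\alpha}\,\Upsilon_\alpha
\]
(which is the standard consequence of \eqref{E:Dalang+}) to the stochastic integral, one obtains an inequality of the form
\[
  M_n(t) \le C\Norm{u_0}_{L^\infty}^p
  + C\int_0^t \Bigl(\sup_{|z|\le M_n(s)^{1/p}}|b(z)|\Bigr)^p\,\ud s
  + C\int_0^t \Bigl(\sup_{|z|\le M_n(s)^{1/p}}|\sigma(z)|\Bigr)^p (t-s)^{\alpha-1}\,\ud s.
\]
Feeding in \eqref{E:subCrtcl} (respectively \eqref{E:Crtcl}) produces a nonlinear Gronwall inequality whose right-hand side grows like $M_n(s)(\log M_n(s))^{p}$ (respectively $M_n(s)(\log M_n(s))^{p\alpha/2}\cdot(t-s)^{\alpha-1}$ for the $\sigma$-term). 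In the subcritical case the resulting Osgood-type ODE admits a global (super-exponential but finite) majorant; in the critical case it admits a majorant on some interval $[0,T^*)$ with $T^*>0$ deterministic. Consequently $M_n$ is bounded uniformly in $n$ on each such interval.

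To transfer this $L^p$ moment bound to an $L^\infty$ pathwise bound and hence to show $\tau_n\to\infty$ (or $\tau_n>T$), I would invoke Kolmogorov's continuity criterion in conjunction with two-point moment estimates
\[
  \E\bigl[|u_n(t,x)-u_n(t',x')|^p\bigr]\le C\bigl(|t-t'|^{p\alpha/2}+|x-x'|^{p\alpha}\bigr),
\]
again a standard outcome of \eqref{E:Dalang+}. The condition $p>(d+2)/\alpha$ is exactly what makes Kolmogorov deliver continuous modifications with H\"older exponents $\alpha/2-\epsilon$ in time and $\alpha-\epsilon$ in space, yielding part (c) once global existence is settled; simultaneously, combined with the $L^p(\R^d)$ decay of $p_t*u_0$ and a covering argument over bounded space-time cylinders, it lifts the moment bound on $M_n$ to a uniform pathwise bound on $\sup_{(s,x)\in[0,t]\times\R^d}|u_n(s,x)|$, which is precisely the obstruction to $\tau_n$ reaching a finite limit.

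The main obstacle, and the place where the extension from $[0,1]$ in \cite{dalang.khoshnevisan.ea:19:global} to $\R^d$ is delicate, will be this last step: on a bounded interval $\sup_x$ is cheap, but on $\R^d$ one must control the supremum globally in space. The plan is to exploit spatial homogeneity of the noise and the integrability $u_0\in L^p(\R^d)$ to ensure that $|u_n(t,x)|$ decays as $|x|\to\infty$, so that the relevant supremum is effectively taken over a compact set whose size grows only polynomially in $t$; together with the H\"older modulus of continuity from Kolmogorov, this yields the desired $L^\infty$ bound. Properly balancing the logarithmic growth allowed by \eqref{E:subCrtcl}, the singular time factor $(t-s)^{\alpha-1}$ from the $\sigma$-term, and the spatial size of the supremum window is the technical core of the proof.
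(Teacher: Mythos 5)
Your overall framework is exactly the paper's: truncate $b,\sigma$ at level $N$ to globally Lipschitz $b_N,\sigma_N$, solve the truncated equation, stop at $\tau_N=\inf\{t:\sup_x|u_N(t,x)|\ge N\}$, patch, and conclude by a tail estimate on $\mathbb{P}(\tau_N<T)$. But the two analytic inputs you sketch do not hold up, and neither is what the paper does.

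The ``nonlinear Gronwall'' inequality for $M_n(t)=\sup_x\E[|u_n(t\wedge\tau_n,x)|^p]$ is not valid: for a superlinear $b$, $\E\bigl[|b(u)|^p\bigr]$ is \emph{not} controlled by $\bigl(\sup_{|z|\le\|u\|_p}|b(z)|\bigr)^p$, because the $p$-th moment is driven by the upper tail of $u$ rather than by $\|u\|_p$. (Try $b(z)=z^2$ and $u$ Gaussian.) The paper sidesteps this completely. Since $b_N,\sigma_N$ are globally Lipschitz with explicit constants $L_{b_N}=\sup_{|z|\le N}|b(z)|/|z|$ and $L_{\sigma_N}=\sup_{|z|\le N}|\sigma(z)|/|z|$, one gets a \emph{linear} Gronwall and an explicit moment bound (Theorem~\ref{T:Mom}) whose exponential rate is $C\max\bigl(L_{b_N},\,p^{1/\alpha}L_{\sigma_N}^{2/\alpha}\bigr)$. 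The hypotheses \eqref{E:subCrtcl}/\eqref{E:Crtcl} are then translated directly into $L_{b_N}=o(\log N)$, $L_{\sigma_N}=o\bigl((\log N)^{\alpha/2}\bigr)$ (resp.\ $O(\cdot)$), which is precisely what makes $N^{-p}\exp\bigl(CpT\max(\cdot)\bigr)\to0$ in Chebyshev for every $T$ (resp.\ for all small $T$). No Osgood comparison is invoked; the superlinearity is absorbed entirely into the $N$-dependence of the Lipschitz constants.

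You correctly identify the second input — passing from a pointwise-in-$x$ moment bound to control of $\sup_{(t,x)\in[0,T]\times\R^d}|u_N|$ — as the crux, but you leave it as a plan (Kolmogorov plus a covering of $\R^d$) that you do not execute, and it is unclear it can be made to work as stated: the noise is spatially homogeneous, so $u_N(t,x)$ does \emph{not} decay pathwise as $|x|\to\infty$ even though its moments do, and Kolmogorov alone gives modulus of continuity on compacts, not a global pathwise bound. The paper instead proves the uniform estimate directly as Theorem~\ref{T:Mom}(c) via the da Prato--Zabczyk factorization $u=(p_t*u_0)+\Psi+\Phi$ with $\Phi(t,x)=\tfrac{\sin(\pi\alpha/2)}{\pi}\int_0^t\!\int_{\R^d}(t-r)^{\alpha/2-1}p_{t-r}(x-z)Y(r,z)\,\ud z\,\ud r$. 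H\"older's inequality in $(r,z)$ bounds $\sup_{t,x}|\Phi|$ by a constant times $\|Y\|_{L^p([0,T]\times\R^d)}$ precisely when $p>(d+2)/\alpha$ (this is where that exponent comes from — not Kolmogorov), and Fubini then turns $\E\sup$ into $\sup\E$ through $\int_0^T\!\int_{\R^d}\E[|Y(r,z)|^p]\,\ud z\,\ud r$, which is finite because $\E[|Y(r,z)|^p]\lesssim J_+(r,z)^p$ and $\int_{\R^d}J_+^p\le\|u_0\|_{L^p}^p$. This is where $u_0\in L^p(\R^d)$ and $b(0)=\sigma(0)=0$ enter. Kolmogorov only appears in part (c), via Corollary~\ref{C:Holder}, after the stopping times have already been controlled.
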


Theorem \ref{T:Main} is proved in Section \ref{S:Main}.

\begin{remark}[Critical vs sub-critical cases] \label{R:Critical}
  We call the case under conditions in \eqref{E:Crtcl} the {\it critical case} and the one in
  \eqref{E:subCrtcl} the {\it sub-critical case}. Dalang {\it et al}
  \cite{dalang.khoshnevisan.ea:19:global} established the global solution for the critical case
  using the semigroup property of the heat equation. In this paper, we cannot restart our SHE
  \eqref{E:SHE} to pass the local solution to global solution due to the fact that it is not clear
  whether at time $T$, $u(T,\cdot)$ as the initial condition for the next step is again an element
  in $L^\infty(\R^d)\cap L^p(\R^d)$ a.s. This issue does not present for a continuous random field
  on a compact spatial domain, which is the case in \cite{dalang.khoshnevisan.ea:19:global} and
  \cite{salins:21:global}.
\end{remark}

\begin{remark}[Regularity of the initial conditions] \label{R:Init}
  In \cite{dalang.khoshnevisan.ea:19:global}, the initial condition $u_0$ is assumed to be a
  H\"older continuous function on $[0,1]$. In contrast, in this paper, we only assume that $ u_0\in
  L^\infty(\R^d)\cap L^p(\R^d)$ for some large $p$. This is one example of the smoothing effect of
  the heat kernel in the stochastic partial differential equation context. This improvement from a
  H\"older continuous function to a measurable function is due to the factorization representation
  of the solution (see \eqref{E:Factorize} below). Similar arguments using this factorization have
  also been carried out by Salins \cite{salins:21:global}.
\end{remark}

\begin{example}[Examples of $b$ and $\sigma$ in Theorem \ref{T:Main}] \label{Eg:Egs}
  (1) The function $g(x) = x\sin(x)$ for $x\in\R$ is locally Lipschitz, but not globally Lipschitz,
  continuous with linear growth and $g(0)=0$. Hence Theorem \ref{T:Main} holds when either $b$ or
  $\sigma$ takes the form of $g$. (2) For the function $g_{a,b}(x):=|x|^b\log^a(1+|x|)$ for $x, a,
  b\in\R$, it is easy to see that the conditions $a+b>0$ and $a+b\ge 1$ imply that $g_{a,b}(0)=0$
  and $g_{a,b}$ is locally Lipschitz continuous, respectively. The growth condition of either
  \eqref{E:subCrtcl} or \eqref{E:Crtcl} makes the further restriction on the suitable choices of
  $(a,b)$.
\end{example}

%
%
%

The extension given in Theorem \ref{T:Main} from a compact spatial domain to the entire space $\R^d$
critically relies on the sharp moment formulas obtained in Theorem \ref{T:Mom} below. These moment
formulas, as extensions of those in
\cite{chen.dalang:15:moments,chen.kim:19:nonlinear,chen.huang:19:comparison} to allow a Lipschitz
drift term, constitute the second and independent contribution of the paper. Indeed, when the drift
term is linear, i.e., $b(u) = \lambda u$, then one can work with the following heat kernel $G_d(t,x)
= p_t(x) e^{\lambda t}$. However, when $b$ is a Lipschitz nonlinear function, the situation is much
more trickier, especially if one wants to allow {\it rough initial conditions}
\cite{chen.dalang:15:moments,chen.kim:19:nonlinear,chen.huang:19:comparison}, namely, $u_0$ being a
signed Borel measure such that
\begin{align} \label{E:InitData}
  \int_{\R^d} e^{-a|x|^2} |u_0|(\ud x)<\infty \quad \text{for all $a>0$},
\end{align}
where $|u_0| = u_{0,+} + u_{0,-}$ and $u_0=u_{0,+} - u_{0,-}$ is the Jordan decomposition of the
signed measure $u_0$. The existence and uniqueness of the solution $u$ is proved in
\cite{huang:17:on} (the proof still works for signed Borel measure). We will prove the following
theorem:

\begin{theorem}[Moment formulas with a Lipschitz drift term] \label{T:Mom}
  Let $u(t,x)$ be the solution to \eqref{E:SHE} and suppose that $b$ and $\sigma$ are globally
  Lipschitz continuous functions and the correlation function $f$ satisfies the improved Dalang's
  condition \eqref{E:Dalang+} for some $\alpha\in(0,1)$. Then we have the following:
  \begin{enumerate}[(a)]
    \item If $u_0\in L^\infty(\R^d)$, then for all $p\ge \max\left(2,\: 2^{-6} L_b^{-2}
      \Upsilon_\alpha^{-1}\right)$, $t>0$ and $x\in\R^d$, it holds that
      \begin{equation} \label{E:Mom}
        \Norm{u(t,x)}_p \leq \left( \frac{\tau}{2}+ 2\Norm{u_0}_{L^\infty}\right)\exp\left(C t \max\left(p^{1/\alpha}\Lip_\sigma^{2/\alpha},\Lip_b\right)\right) \,,
      \end{equation}
      where $\Norm{\cdot}_p$ and $\Norm{\cdot}_{L^\infty}$ denote the $L^p\left(\Omega\right)$-norm
      and $L^\infty(\R^d)$-norm, respectively,
      \begin{align} \label{E:tau}
        \tau:= \frac{ |b(0)|}{L_b}\vee \frac{|\sigma(0)|}{L_{\sigma}}\,,
      \end{align}
      $C=\max\left(4, 2^{6/\alpha-1}\Upsilon_\alpha^{1/\alpha}\right)$, and
      \begin{align} \label{E:Lb}
        L_b      := \sup_{z\in\R} \frac{|b(z) - b(0)|}{|z|} \quad \text{and} \quad
        L_\sigma := \sup_{z\in\R} \frac{|\sigma(z) - \sigma(0)|}{|z|}.
      \end{align}
    \item If $u_0$ is a rough initial condition (see \eqref{E:InitData}), then for all $t>0$,
      $x\in\R^d$ and $p\ge 2$,
      \begin{align} \label{E:MomRough}
        \Norm{u(t,x)}_p \le
        \sqrt{3} \left[\tau + J_+(t,x)\right] \exp\left(C t\max\left(p^{1/\alpha}\Lip_\sigma^{2/\alpha},\Lip_b\right)\right),
      \end{align}
      where $J_+(t,x) := (p_t * |u_0|)(x)$ and the constant $C$ does not depend on
      $(t,x,p,L_b,L_\sigma)$.
    \item If $u_0\in L^{\infty}(\R^d)\cap L^p(\R^d)$ for some $p\ge (2+d)/\alpha$ and if $\sigma(0)
      = b(0) = 0$, then for all $t>0$,
      \begin{align} \label{E:MomBdd}
        \hspace{-1em}
        \Norm{\sup_{(s,x)\in [0,t]\times\R^d} u(s,x)}_p
        & \le \Norm{u_0}_{L^\infty} + C \Norm{u_0}_{L^p} \left(L_b+L_\sigma\right) \exp\left(Ct\max\left(p^{1/\alpha}L_{\sigma}^{2/\alpha},\: L_b\right)\right),
      \end{align}
      where $\Norm{\cdot}_{L^p}$ denotes the $L^p(\R^d)$-norm and the constant $C$ does not depend
      on $(t,x,p,L_b,L_\sigma)$.
  \end{enumerate}
\end{theorem}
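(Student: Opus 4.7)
The plan is to combine Picard iteration with the Burkholder--Davis--Gundy (BDG) inequality for Walsh integrals to derive pointwise $L^p$-bounds in parts (a) and (b), and then to invoke a stochastic Fubini / factorization representation together with an $L^q$-embedding to convert these pointwise bounds into the uniform supremum bound of part (c). For parts (a) and (b), take $L^p$-norms in \eqref{E:Mild} and use $|b(z)|\le |b(0)|+L_b|z|$, $|\sigma(z)|\le |\sigma(0)|+L_\sigma|z|$. Minkowski's inequality for the drift integral (where $\int_{\R^d} p_{t-s}(x-y)\,\ud y=1$) and the BDG inequality for the stochastic integral yield
\begin{equation*}
  \Norm{u(t,x)}_p^2 \le 3|J_0(t,x)|^2 + 3\Bigl(\int_0^t (|b(0)| + L_b M(s))\,\ud s\Bigr)^2 + 12 p\int_0^t K_f(t-s)(|\sigma(0)|+L_\sigma M(s))^2\,\ud s,
\end{equation*}
where $J_0(t,x)=(p_t*u_0)(x)$, $M(s):=\sup_{y\in\R^d}\Norm{u(s,y)}_p$, and $K_f(r):=(2\pi)^{-d}\int_{\R^d} e^{-r|\xi|^2}\hat f(\xi)\,\ud\xi$. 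The improved Dalang condition \eqref{E:Dalang+} yields the fractional estimate $\int_0^t K_f(r)\,\ud r\le C\Upsilon_\alpha t^\alpha$ and, more generally, the Mittag--Leffler-type control $K_f^{*n}(t)/n!\lesssim (C\Upsilon_\alpha t^\alpha)^n/\Gamma(1+n\alpha)$ on the iterated kernels. Absorbing $|b(0)|$ and $|\sigma(0)|$ into the additive factor $\tau$ from \eqref{E:tau} and iterating the resulting scalar inequality via Picard delivers $M(t)^2\le C_0\,E_\alpha(CpL_\sigma^2\Upsilon_\alpha t^\alpha)\exp(CL_b^2 t^2)$, where $E_\alpha$ is the Mittag--Leffler function; its asymptotic $E_\alpha(x)\sim\exp(x^{1/\alpha})$ then produces the claimed exponent $\exp(Ct\max(p^{1/\alpha}L_\sigma^{2/\alpha},L_b))$ of \eqref{E:Mom}. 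Part (b) follows by the identical argument, carrying the pointwise weight $J_+(t,x)=(p_t*|u_0|)(x)$ through the iteration in place of $\Norm{u_0}_{L^\infty}$; this is permissible because the heat semigroup preserves pointwise majorization.

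For part (c), pointwise BDG cannot see the supremum over $(s,x)$, so we turn to the factorization method. The identity $\int_s^t (t-r)^{\alpha/2-1}(r-s)^{-\alpha/2}\,\ud r=\pi/\sin(\pi\alpha/2)$, combined with the semigroup property of $p_t$ and stochastic Fubini, rewrites the stochastic convolution as $C_\alpha\int_0^t(t-s)^{\alpha/2-1}\int_{\R^d} p_{t-s}(x-y)Y^{(\sigma)}(s,y)\,\ud y\,\ud s$, where $Y^{(\sigma)}(s,y)=\int_0^s (s-r)^{-\alpha/2}\int p_{s-r}(y-z)\sigma(u(r,z))W(\ud r,\ud z)$; the drift term admits an analogous deterministic decomposition into an auxiliary process $Y^{(b)}$. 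H\"older's inequality applied to the outer deterministic convolution, with conjugate exponent $q>(d+2)/\alpha$, gives
\begin{equation*}
  \sup_{(s,x)\in[0,t]\times\R^d}|u(s,x)| \le \Norm{u_0}_{L^\infty} + C_{t,\alpha}\bigl(\|Y^{(b)}\|_{L^q([0,t]\times\R^d)} + \|Y^{(\sigma)}\|_{L^q([0,t]\times\R^d)}\bigr),
\end{equation*}
where the constant is finite precisely because $(t-s)^{\alpha/2-1}p_{t-s}(x-y)$ lies in $L^{q'}([0,t]\times\R^d)$ exactly under the threshold $p>(d+2)/\alpha$. Taking $L^p$-norms and invoking Fubini reduces the bound to $\int_0^t\int_{\R^d}\E|Y^{(\sigma)}(s,y)|^p\,\ud y\,\ud s$; a second application of BDG, combined with the pointwise bound from (b) (with $\tau=0$, since $b(0)=\sigma(0)=0$) and Young's convolution inequality using $u_0\in L^p(\R^d)$, produces the $\Norm{u_0}_{L^p}$ factor in \eqref{E:MomBdd}.

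The principal technical obstacle is extracting the sharp $p^{1/\alpha}$ scaling in the exponent of \eqref{E:Mom}, which requires tracking the Mittag--Leffler growth produced by iterating a fractionally integrable kernel of exponent $\alpha$; a direct Gronwall would yield only $\exp(C p t)$ and lose the sharpness. A secondary subtlety lies in the exponent matching in part (c): the threshold $p>(d+2)/\alpha$ is precisely what is needed for the deterministic kernel $(t-s)^{\alpha/2-1}p_{t-s}$ to sit in the dual $L^{q'}([0,t]\times\R^d)$-space, and any weaker integrability assumption on $u_0$ would break the uniform supremum estimate.
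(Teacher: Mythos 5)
Your part (c) plan (factorization, H\"older with a small conjugate exponent, then reduction to the pointwise bound from (b)) is essentially the paper's argument, modulo a slip of notation: the threshold the argument needs is $p>(d+2)/\alpha$, not $q>(d+2)/\alpha$ --- $q$ is the \emph{small} conjugate.

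The genuine gap is in parts (a)--(b), specifically in how you handle the drift. You claim that Picard iteration of the scalar inequality yields
$M(t)^2\le C_0\,E_\alpha\bigl(CpL_\sigma^2\Upsilon_\alpha t^\alpha\bigr)\exp\bigl(CL_b^2t^2\bigr)$,
and then pass to the exponent via the Mittag--Leffler asymptotics. But the factor $\exp(CL_b^2t^2)$ is quadratic in both $t$ and $L_b$, whereas \eqref{E:Mom} and \eqref{E:MomRough} assert $\exp\bigl(Ct\max(p^{1/\alpha}L_\sigma^{2/\alpha},L_b)\bigr)$, which is linear in $t$ and in $L_b$. For $tL_b$ large your estimate is strictly weaker and does not imply the theorem. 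This is not a harmless loss: in the proof of Theorem \ref{T:Main}, the truncated constants satisfy $L_{b_N}\sim\log N$, and for fixed $T$ the factor $\exp(CT^2(\log N)^2)$ overwhelms $N^{-p}$, so the stopping-time argument in \eqref{E:TauNT} would not close with your bound.

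The point you are missing is that the drift term, after the Cauchy--Schwarz step that the paper also uses (producing an extra factor $L_b^2 t$), can be \emph{absorbed into the same convolution structure} as the noise term. The paper recasts both contributions through a single kernel $k_{a,b}(t)=ak_{1,0}(t)+bt$ with $a=8pL_\sigma^2$, $b=L_b^2$ (see \eqref{E:k}), and the resolvent series $H_{a,b}(t;\gamma)$ in \eqref{E:H} is controlled by Lemma \ref{L:EstHt}/Corollary \ref{C:Growth} via the Laplace transform: the transform of $bt$ is $b/\beta^2$, so the Abelian condition forces $\beta\gtrsim\sqrt{b\gamma}$, i.e.\ $\beta\gtrsim L_b$. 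That square root is exactly what turns $L_b^2$ into $L_b$ and $t^2$ into $t$; your componentwise Gronwall/Mittag--Leffler iteration decouples the drift from the noise kernel and loses it. In part (a) the paper is even more direct: with $\mathcal{N}_\beta(u)=\sup_{t,x}e^{-\beta t}\Norm{u(t,x)}_p$ the fixed-point map is a strict contraction once $\beta\gtrsim\max(p^{1/\alpha}L_\sigma^{2/\alpha},L_b)$, and the exponent drops out immediately. You would need to replace your Picard step with one of these two devices to recover the stated exponent.
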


\begin{remark}
  Part (a) of Theorem \ref{T:Mom} can be derived from part (b) by noticing that $J_+(t,x)\le
  \Norm{u_0}_{L^\infty}$. However, we still keep part (a) due to the simplicity of its proof.
\end{remark}

Using the moment bounds in \eqref{E:MomRough}, one can extend the H\"older regularity from the SHE
without drift (see \cite{sanz-sole.sarra:02:holder} for the bounded initial condition case and
\cite{chen.huang:19:comparison} for the rough initial condition case) to the one with a Lipschitz
drift.

\begin{corollary}[H\"older regularity] \label{C:Holder}
  Let $u(t,x)$ be the solution to \eqref{E:SHE} starting from a rough initial condition (see
  \eqref{E:InitData}) and suppose that $b$ and $\sigma$ are globally Lipschitz continuous functions.
  If the correlation function $f$ satisfies the improved Dalang's condition \eqref{E:Dalang+} for
  some $\alpha\in(0,1)$. Then $u\in C^{\alpha /2-, \, \alpha-}\left((0,\infty)\times\R^d\right)$
  a.s.
\end{corollary}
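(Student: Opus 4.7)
The plan is to localize to any compact set $K\subset (0,\infty)\times\R^d$ and show that the $L^p(\Omega)$-modulus of continuity of $u$ on $K$ has exponent $\alpha/2$ in time and $\alpha$ in space for every sufficiently large $p$; Kolmogorov's continuity theorem applied on an exhaustion of $(0,\infty)\times\R^d$ by compact sets will then yield the almost-sure Hölder regularity.

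Working from the mild form \eqref{E:Mild}, I would split the increment as
\begin{equation*}
  u(t,x) - u(s,y) = \bigl[J_0(t,x)-J_0(s,y)\bigr] + \bigl[I_b(t,x)-I_b(s,y)\bigr] + \bigl[I_\sigma(t,x)-I_\sigma(s,y)\bigr],
\end{equation*}
where $J_0(t,x)=(p_t* u_0)(x)$ and $I_b,I_\sigma$ are the drift and stochastic convolutions in \eqref{E:Mild}. For $t>0$ the rough-initial-condition hypothesis \eqref{E:InitData} makes $J_0$ a smooth function of $(t,x)$ with all derivatives locally bounded, so the $J_0$-increment is Hölder of every order and contributes nothing to the bottleneck. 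For the drift term, the Lipschitz estimate $|b(z)|\le |b(0)|+L_b|z|$ combined with the moment bound \eqref{E:MomRough} from Theorem~\ref{T:Mom}(b) yields $\sup_{(r,z)\in[0,T]\times\R^d}\|b(u(r,z))\|_p<\infty$ on bounded time intervals. Standard kernel differences such as
\begin{equation*}
  \int_{\R^d}\bigl|p_{t-r}(x-z)-p_{t-r}(y-z)\bigr|\,\ud z\le C\min\bigl(1,\,|x-y|/\sqrt{t-r}\bigr),
\end{equation*}
together with an analogous time-increment bound, then give $\|I_b(t,x)-I_b(s,y)\|_p\le C(|t-s|^{1-\epsilon}+|x-y|^{1-\epsilon})$ after Minkowski, which is strictly better than what we need.

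The exponents $\alpha/2$ and $\alpha$ arise from the stochastic integral $I_\sigma$. I would apply the Burkholder-Davis-Gundy inequality to bring $\|\cdot\|_p$ inside the integral, use the Lipschitz bound $|\sigma(u)|^2\le C(1+|u|^2)$, and invoke \eqref{E:MomRough} to obtain uniform control of $\|\sigma(u(r,z))\|_p^2$ on the compact region of interest. This reduces the problem to a deterministic Fourier-side estimate of space-time differences of $p_\cdot$ convolved with $f$, which under the improved Dalang condition \eqref{E:Dalang+} produces exactly $|t-s|^{\alpha/2-\epsilon}$ and $|x-y|^{\alpha-\epsilon}$. The deterministic computations are precisely those carried out by Sanz-Solé and Sarrà \cite{sanz-sole.sarra:02:holder} (bounded initial data) and by Chen and Huang \cite{chen.huang:19:comparison} (rough initial data); the only thing to verify is that the presence of a Lipschitz drift does not perturb them, which is immediate from the preserved structure of \eqref{E:MomRough} and BDG.

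\textbf{Main obstacle.} There is no essentially new analytic difficulty: the heart of the argument is the Fourier-side kernel estimate that converts \eqref{E:Dalang+} into the Hölder exponents $\alpha/2$ and $\alpha$, which is already in the literature. The only genuine check is that the drift integral is strictly more regular than the stochastic one, so the drift does not degrade the exponents; this follows cleanly from Theorem~\ref{T:Mom}(b). A minor but worthwhile care point is to ensure that the moment bound \eqref{E:MomRough} is integrated uniformly over the compact time-space window, since $J_+(t,x)=(p_t*|u_0|)(x)$ is locally bounded away from $t=0$.
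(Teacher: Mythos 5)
Your decomposition $u = J_0 + I_b + I_\sigma$ and your general plan (moment bounds plus Kolmogorov) match the paper's proof, and your conclusion that the drift convolution is more regular than the stochastic one is correct. However, there is a genuine gap in the treatment of $I_b$ (and implicitly $I_\sigma$): the claimed bound $\sup_{(r,z)\in[0,T]\times\R^d}\|b(u(r,z))\|_p<\infty$ is false under the corollary's hypothesis of a rough initial condition. By \eqref{E:MomRough}, $\|u(r,z)\|_p$ is controlled by $\tau + J_+(r,z)$ with $J_+(r,z)=(p_r*|u_0|)(z)$, and under \eqref{E:InitData} the measure $|u_0|$ may grow like $e^{c|z|^2}$ for every $c>0$, so $J_+(r,\cdot)$ is typically \emph{unbounded} on $\R^d$. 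Restricting the target point $(t,x)$ to a compact set does not help, because the convolution in $I_b$ integrates $z$ over all of $\R^d$. Consequently the total-variation kernel estimate $\int_{\R^d}|p_{t-r}(x-z)-p_{t-r}(y-z)|\,\ud z\le C\min(1,|x-y|/\sqrt{t-r})$, which requires pulling a sup of the integrand out of the integral, is not applicable here, and your final ``minor care point'' (local boundedness of $J_+$ away from $t=0$) is addressed to the wrong set of points.

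The correct fix, which is what the paper does, is to keep $J_+(s,y)$ (equivalently, the measure $\mu(\ud z) := |u_0|(\ud z)+\tau\,\ud z$) \emph{inside} the space-time integrals and exploit the heat semigroup: for instance, one uses a pointwise kernel-difference bound of the form
\begin{equation*}
  |p_{t-s}(x-y)-p_{t-s}(x'-y)| \le C\,|x-x'|^{\alpha}(t-s)^{-\alpha/2}\bigl[p_{2(t-s)}(x-y)+p_{2(t-s)}(x'-y)\bigr],
\end{equation*}
and then the $\ud y$-integration $\int_{\R^d}p_{2(t-s)}(x-y)\,p_s(y-z)\,\ud y = p_{2t-s}(x-z)$ collapses to a single kernel, producing finite quantities $J_0(2t,x)$ rather than an infinite sup. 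This semigroup collapse is the step your argument is missing; without it, every integral you write against a rough initial datum is a priori divergent. Once you make this replacement, the rest of your outline (exponent $1-\epsilon$ for $I_b$, exponents $\alpha/2-\epsilon$ and $\alpha-\epsilon$ for $I_\sigma$ via the Fourier estimate under \eqref{E:Dalang+}, Kolmogorov on an exhaustion) does go through.
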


Parts (a), (b), and (c) of Theorem \ref{T:Mom} are proved in Sections \ref{SS:bddInit}, \ref{SS:MomRough}, and
\ref{SS:MomBdd}, respectively. Corollary \ref{C:Holder} is proved in Section \ref{SS:Holder}. \bigskip

Finally, we list a few open questions for future exploration: (1) Theorem \ref{T:Main} cannot handle
either the constant one initial condition or the Dirac delta initial condition. It is interesting to
investigate if either global or local solution exists for these two special initial conditions. (2)
Can one improve Theorem \ref{T:Main} by relaxing the growth conditions in \eqref{E:subCrtcl} and
\eqref{E:Crtcl} to the Osgood-type conditions in \eqref{E:Salins} as in \cite{salins:21:global}?
\medskip

In the rest of the paper, we prove Theorems \ref{T:Mom} and \ref{T:Main} in Sections \ref{S:Moment}
and \ref{S:Main}, respectively.

\section{Moment bounds with a Lipschitz drift term} \label{S:Moment}

\subsection{The bounded initial data case -- Proof of part (a) of Theorem \ref{T:Mom}} \label{SS:bddInit}

\begin{proof}[Proof of Theorem \ref{T:Mom} (a):]
  By Minkowski's inequality,
  \begin{align*}
    \Norm{u(t,x)}_p \leq
    & (p_t*u_0)(x) + \int_0^t \int_{\R^d} p_{t-s}(x-y)\left(|b(0)|+ L_b \Norm{u(s,y)}_p\right) \ud y \ud s                        \\
    & + z_{p}\bigg(\int_0^t \int_{\R^d} \int_{\R^d} p_{t-s}(x-y) p_{t-s}(x-y')\left(|\sigma(0)|+ L_{\sigma}\Norm{u(s,y)}_p\right) \\
    & \qquad \times\left(|\sigma(0)|+ L_{\sigma}\Norm{u(s,y')}_p\right) f(y-y') \ud y \ud y' \ud s\bigg)^{1/2}\,,
  \end{align*}
  where $z_p$ is the constant coming from the Burkholder-Davis-Gundy inequality and $z_p \sim 2
  \sqrt{p}$ as $p\to \infty$; see \cite[Theorem 1.4]{conus.khoshnevisan:12:on} and references
  therein. For $\beta>0$, consider the following norm
  \begin{align*}
    \mathcal{N}_{\beta}(u) := \sup_{(t,x)\in(0,\infty)\times\R^d} e^{-\beta t} \Norm{u(t,x)}_p\,.
  \end{align*}
  Then we see that
  \begin{align*}
         & e^{-\beta t}\Norm{u(t,x)}_p                                                                                                                                                             \\
    \leq & \|u_0\|_{L^{\infty}}  + \int_0^t \int_{\R^d} e^{-\beta(t-s)}p_{t-s}(x-y)\left(|b(0)|+ L_b \left(\sup_{(s,y)\in(0,\infty)\times\R^d} e^{-\beta s} \|u(s,y)\|_p\right)\right) \ud y \ud s \\
         & + z_{p}\bigg(\int_0^t \int_{\R^d} \int_{\R^d} e^{-2\beta (t-s)}p_{t-s}(x-y) p_{t-s}(x-y')                                                                                               \\
         & \qquad \times \left(|\sigma(0)|+ L_{\sigma}\sup_{(s,y)\in(0,\infty)\times\R^d}e^{-\beta s}\Norm{u(s,y)}_p\right)^2 f(y-y') \ud y \ud y' \ud s\bigg)^{1/2}\,.
  \end{align*}
  Hence,
  \begin{align*}
    \mathcal{N}_{\beta}(u) \leq
      & \Norm{u_0}_{L^{\infty}} + \frac{1}{\beta} \left(|b(0)| + L_b \: \mathcal{N}_{\beta}(u)\right) \\
      & + z_p \left( \left(2\pi\right)^{-d} \int_0^\infty \int_{\R^d} e^{-2\beta s} e^{-s |\xi|^2} \hat{f}(\xi) \ud\xi \ud s\right)^{1/2} \bigg(|\sigma(0)|+ L_{\sigma}\: \mathcal{N}_{\beta}(u)\bigg).
  \end{align*}
  By the improved Dalang's condition \eqref{E:Dalang+} and by assuming that $\beta>1/2$, we see that
  \begin{align*}
    \left(2\pi\right)^{-d} \int_0^\infty \int_{\R^d} e^{-2\beta s} e^{-s |\xi|^2} \hat{f}(\xi) \ud\xi \ud s
    & =\left(2\pi\right)^{-d} \int_{\R^d} \frac{\widehat{f}\left(\ud\xi\right)}{\left(2\beta+|\xi|^2\right)^{1-\alpha}\left(2\beta+|\xi|^2\right)^{\alpha}}
      \le (2\beta)^{-\alpha}\Upsilon_\alpha.
  \end{align*}
  Therefore,
  \begin{align*}
    \mathcal{N}_{\beta}(u)
    \le & \Norm{u_0}_{L^{\infty}}
        + \frac{L_b}{\beta} \left(\frac{|b(0)|}{L_b} + \mathcal{N}_{\beta}(u)\right)
        + z_p\left(2\beta\right)^{-\alpha/2} \Upsilon_\alpha^{1/2}L_\sigma\left(\frac{|\sigma(0)|}{L_\sigma}+\mathcal{N}_{\beta}(u)\right)\,.
  \end{align*}

  Now by choosing $\beta$ large enough, namely,
  \begin{align*}
    \beta>\frac{1}{2},\quad
    \frac{L_b}{\beta} \leq \frac{1}{4}, \quad
    z_p \left(2\beta\right)^{-\alpha/2} \Upsilon_\alpha^{1/2} L_{\sigma} \leq \frac{1}{4}\,
    \quad \Longleftrightarrow \quad
    \beta>\max\left(4L_b,\: \frac{1}{2},\:  \frac{1}{2}\left(16z_p^2 L_{\sigma}^2 \Upsilon_\alpha\right)^{1/\alpha}\right),
  \end{align*}
  we form a contraction map, which can be easily solved:
  \begin{align*}
    \mathcal{N}_{\beta}(u) \leq 2 \Norm{u_0}_{L^\infty} + \frac{|b_0|}{2L_b} \vee \frac{|\sigma(0)|}{2 L_{\sigma}}\,.
  \end{align*}
  Notice that $z_p\le 2 \sqrt{p}$, we have that
  \begin{align*}
    \frac{1}{2}< \frac{1}{2}\left(16z_p^2 L_{\sigma}^2 \Upsilon_\alpha\right)^{1/\alpha}
    \quad \Longleftrightarrow \quad
    1/p < 64 L_b^2 \Upsilon_\alpha.
  \end{align*}
  Therefore, for all $t>0$, when $1/p < \min\left(64 L_b^2 \Upsilon_\alpha,1/2\right)$, we can take
  \begin{align*}
    \max\left(4L_b,\: \frac{1}{2}\left(16 (2\sqrt{p})^2 L_{\sigma}^2 \Upsilon_\alpha\right)^{1/\alpha}\right)
    \le \max\left(4, 2^{6/\alpha-1}\Upsilon_\alpha^{1/\alpha}\right)\max\left(L_b,\:p^{1/\alpha} L_\sigma^{2/\alpha} \right)
    =:\beta
  \end{align*}
  to have that
  \begin{align*}
    \Norm{u(t,x)}_p \le \left(2 \Norm{u_0}_{L^\infty} + \frac{|b_0|}{2L_b} \vee \frac{|\sigma(0)|}{2 L_{\sigma}}\right)
    \exp\left(\beta t\right), \quad \text{for all $t>0$}.
  \end{align*}
  This proves part (a) of Theorem \ref{T:Mom}.
\end{proof}

\subsection{A Gronwall-type lemma} \label{SS:Grownwal}

Let us introduce some functions. For $a,b\ge 0$, denote
\begin{align}\label{E:k}
  k_{a,b}(t):=\int_{\R^d}\left(af(z)+bt\right)G(t,z)\ud z = a k_{1,0}(t) + bt.
\end{align}
By the Fourier transform, this function can be written in the following form
\begin{align}\label{E:k2}
  k_{1,0}(t):=(2\pi)^{-d} \int_{\R^d}\hat{f}(\ud\xi)\exp\left(-\frac{t|\xi|^2}{2}\right).
\end{align}
Define $h_0^{a,b}(t):=1$ and for $n\ge 1$,
\begin{align}\label{E:hn}
  h_n^{a,b}(t)= \int_0^t \ud s \: h_{n-1}^{a,b}(s) k_{a,b}(t-s).
\end{align}
Let
\begin{align}\label{E:H}
  H_{a,b}(t;\gamma):= \sum_{n=0}^\infty \gamma^n h_n^{a,b}(t),\qquad\text{for all $\gamma\ge 0$.}
\end{align}
When we have $a=1$ and $b=0$, we will use $k(t)$,  $h_n(t)$ and $H(t;\gamma)$ to denote
$k_{1,0}(t)$, $h_n^{1,0}(t)$ and $H_{1,0}(t;\gamma)$, respectively. Note that this convention makes
our notation in case of $a=1$ and  $b=0$ consistent with those in \cite{chen.huang:19:comparison},
\cite{chen.kim:19:nonlinear} or \cite{balan.chen:18:parabolic}. The following lemma generalizes
Lemma 2.5 in \cite{chen.kim:19:nonlinear} or Lemma 3.8 in \cite{balan.chen:18:parabolic} from the
case $a=1$ and $b=0$ to the case with general parameters $a$ and $b$.

\begin{lemma} \label{L:EstHt}
  Suppose that the correlation function $f$ satisfies Dalang's condition \eqref{E:Dalang}. Then for
  all $a\ge 0$,  $b\ge 0$, and $\gamma\ge 0$, it holds that
  \begin{align} \label{E:Var}
    \limsup_{t\rightarrow\infty} \frac{1}{t}\log H_{a,b}(t;\gamma)
    \le  \inf\left\{\beta>0: \:a \Upsilon\left(2\beta\right) + \frac{b}{2\beta^2} < \frac{1}{2\gamma}\right\},
  \end{align}
  where $\Upsilon(\beta)$ is defined in \eqref{E:Dalang}.
\end{lemma}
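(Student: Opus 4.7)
The plan is to recast $H_{a,b}(t;\gamma)$ as the solution of a Volterra-type renewal equation and then dominate it by an explicit exponential supersolution whose growth rate matches the threshold in \eqref{E:Var}.

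The first step is to check, directly from the recursion \eqref{E:hn} and Fubini, that
$$H_{a,b}(t;\gamma) = 1 + \gamma \int_0^t H_{a,b}(s;\gamma)\, k_{a,b}(t-s)\, \ud s, \qquad t \ge 0,$$
and to compute the Laplace transform $\tilde{k}_{a,b}(\beta) := \int_0^\infty e^{-\beta t} k_{a,b}(t)\,\ud t$. Plugging the Fourier representation \eqref{E:k2} in and interchanging integrals yields
$$\tilde{k}_{1,0}(\beta) = (2\pi)^{-d}\int_{\R^d}\frac{\hat{f}(\ud\xi)}{\beta + |\xi|^2/2} = 2\,\Upsilon(2\beta),$$
and since the Laplace transform of $t$ is $1/\beta^2$, one obtains $\tilde{k}_{a,b}(\beta) = 2a\,\Upsilon(2\beta) + b/\beta^2$. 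Consequently, the condition $\gamma \tilde{k}_{a,b}(\beta) < 1$ is exactly $a\,\Upsilon(2\beta) + b/(2\beta^2) < 1/(2\gamma)$, matching the infimum set in \eqref{E:Var}.

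For any $\beta > 0$ in this admissible set, set $C_\beta := [1 - \gamma\tilde{k}_{a,b}(\beta)]^{-1} > 0$ and $g(t) := C_\beta e^{\beta t}$. A short computation using the substitution $u = t-s$ gives
$$\gamma \int_0^t g(s)\, k_{a,b}(t-s)\, \ud s = \gamma C_\beta e^{\beta t}\int_0^t e^{-\beta u} k_{a,b}(u)\, \ud u \le \gamma C_\beta \tilde{k}_{a,b}(\beta)\, e^{\beta t} = g(t) - e^{\beta t},$$
so $g$ is a supersolution of the renewal equation: $g(t) \ge 1 + \gamma \int_0^t g(s) k_{a,b}(t-s)\,\ud s$ for every $t \ge 0$. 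Substituting this inequality into itself $N$ times and using that $k_{a,b}\ge 0$ and $g\ge 0$, one obtains
$$g(t) \ge \sum_{n=0}^N \gamma^n h_n^{a,b}(t) \qquad \text{for every } N\ge 0,$$
whence $H_{a,b}(t;\gamma) \le C_\beta e^{\beta t}$ after sending $N \to \infty$. Taking the logarithm, dividing by $t$, and letting $t\to\infty$ produces $\limsup_{t\to\infty} t^{-1}\log H_{a,b}(t;\gamma) \le \beta$, and infimizing over admissible $\beta$ yields \eqref{E:Var}.

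The only delicate point is the Fubini step justifying $\tilde{k}_{1,0}(\beta) = 2\Upsilon(2\beta)$: it requires $k_{1,0}$ to be locally integrable in $t$ near $0$, which is a standard consequence of Dalang's condition \eqref{E:Dalang}. Otherwise the argument is clean, and the admissible set for $\beta$ is non-empty whenever $\gamma > 0$ because both $\Upsilon(2\beta)$ and $b/(2\beta^2)$ tend to $0$ as $\beta \to \infty$ (by monotone convergence applied to \eqref{E:Dalang}).
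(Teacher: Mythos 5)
Your proof is correct, and the central computation coincides with the paper's: both reduce the problem to evaluating the Laplace transform of $k_{a,b}$, getting $\widetilde k_{a,b}(\beta)=2a\,\Upsilon(2\beta)+b/\beta^2$ from the Fourier representation \eqref{E:k2}, so that the admissibility threshold is $\gamma\,\widetilde k_{a,b}(\beta)<1$. Where you diverge is in how that threshold is converted into the growth estimate. The paper computes the Laplace transform of $H_{a,b}(\cdot;\gamma)$ itself as a geometric series $\frac{1}{\beta}\sum_n \gamma^n \widetilde k_{a,b}(\beta)^n$ and then invokes, with a citation to Lemma~3.8 of Balan--Chen, the general fact $\limsup_{t\to\infty} t^{-1}\log H\le\inf\{\beta:\int_0^\infty e^{-\beta t}H(t)\,\ud t<\infty\}$. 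You instead recast $H_{a,b}$ as the solution of the renewal equation $H=1+\gamma\,(k_{a,b}*H)$ and build an explicit exponential supersolution $g(t)=C_\beta e^{\beta t}$ which you iterate to dominate the partial sums $\sum_{n\le N}\gamma^n h_n^{a,b}$. Your route is more self-contained (it does not rely on the external Laplace-abscissa lemma), and it actually produces a slightly stronger, non-asymptotic conclusion: the explicit bound $H_{a,b}(t;\gamma)\le C_\beta e^{\beta t}$ for all $t\ge 0$, with the identifiable prefactor $C_\beta=(1-\gamma\widetilde k_{a,b}(\beta))^{-1}$, rather than only a $\limsup$ statement. The paper's route is shorter given the cited lemma and mirrors the precedent it is generalizing. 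Both proofs correctly handle the nonnegativity hypotheses and the verification that the admissible set of $\beta$ is nonempty under Dalang's condition.
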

\begin{proof}
  Here we follow the arguments in the proof of Lemma 3.8 of \cite{balan.chen:18:parabolic}. In
  particular,
  \begin{align*}
    \limsup_{t \to \infty}\frac{1}{t}\log H_{a,b}(t;\gamma) \leq
    \inf \left\{\beta>0; \int_0^{\infty}e^{-\beta t}H_{a,b}(t;\gamma)dt<\infty\right\}.
  \end{align*}
  Notice that
  \begin{align*}
    \int_0^{\infty}e^{-\beta t}H_{a,b}(t;\gamma)\ud t
    = & \sum_{n \geq 0}\gamma^n \int_0^{\infty}e^{-\beta t}h_n^{a,b}(t)\ud t                                                                                                  \\
    = & \sum_{n \geq 0}\gamma^n \left[\int_0^{\infty}e^{-\beta t}k_{a,b}(t)\ud t\right]^n \left[\int_0^{\infty}e^{-\beta t}h_0^{a,b}(t)\ud t\right]                           \\
    = & \frac{1}{\beta}\sum_{n \geq 0}\gamma^n \left[a \int_0^{\infty}e^{-\beta t}k(t)\ud t + \frac{b}{\beta^2} \right]^n                                                     \\
    = & \frac{1}{\beta}\sum_{n \geq 0}\gamma^n \left[a \left(2\pi\right)^{-d}\int_{\R^d} \frac{\widehat{f}(\ud\xi)}{\beta + \frac{|\xi|^2}{2}}  + \frac{b}{\beta^2} \right]^n \\
    = & \frac{1}{\beta}\sum_{n \geq 0}\gamma^n \left[2 a \Upsilon(2\beta)+ \frac{b}{\beta^2} \right]^n,
  \end{align*}
  where in the fourth equality we have used \eqref{E:k2}. The lemma is proved by noticing that
  \begin{align*}
    \int_0^{\infty}e^{-\beta t}H_{a,b}(t;\gamma)\ud t <\infty \quad \Longleftrightarrow \quad
    2 a \Upsilon(2\beta)+ \frac{b}{\beta^2} < \frac{1}{\gamma}.
  \end{align*}
  One may check the proof of Lemma 3.8 of \cite{balan.chen:18:parabolic} for more details. This
  proves the lemma.
\end{proof}

\begin{corollary} \label{C:Growth}
  Suppose that the correlation function $f$ satisfies the improved Dalang's condition
  \eqref{E:Dalang+} for some $\alpha\in (0,1)$.  Then for all $a\ge 0$ and $b\ge 0$, when $\gamma>0$
  is large enough, it holds that
  \begin{align} \label{E:Var}
    \limsup_{t\rightarrow\infty} \frac{1}{t}\log H_{a,b}(t;\gamma)
    \le \max\left(2^{3/\alpha} \left(a C \gamma \right)^{1/\alpha},  \sqrt{2b\gamma}\right),
  \end{align}
  where the constant $C$ can be chosen to be
  \begin{align} \label{E:ConstC}
    C=\left(2\pi\right)^{-d} 2^{-\alpha}\max\left(\int_{|\xi|\le 1}\widehat{f}(\ud\xi), \int_{|\xi|>1}\frac{\widehat{f}(\ud\xi)}{|\xi|^{2(1-\alpha)}} \right).
  \end{align}
\end{corollary}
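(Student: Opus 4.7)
The plan is to apply Lemma \ref{L:EstHt} and then exhibit an explicit $\beta>0$ satisfying the constraint $a\Upsilon(2\beta)+b/(2\beta^2)<1/(2\gamma)$; by Lemma \ref{L:EstHt}, any such $\beta$ is an upper bound for the left-hand side of \eqref{E:Var}. So the problem is reduced to a quantitative upper bound for $\Upsilon(2\beta)$ under the improved Dalang condition \eqref{E:Dalang+}.

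The key idea is to split the integral defining $\Upsilon(2\beta)$ into the low-frequency piece $\{|\xi|\le 1\}$ and the high-frequency piece $\{|\xi|>1\}$, and to introduce a $\beta^{-\alpha}$ decay (rather than the $\beta^{-1}$ decay available under the bare Dalang condition) using Young's inequality. Specifically, applying Young's inequality with conjugate exponents $1/\alpha$ and $1/(1-\alpha)$ to the product $(2\beta)^{\alpha}\cdot |\xi|^{2(1-\alpha)}$ yields the pointwise bound
\begin{align*}
(2\beta)^{\alpha}|\xi|^{2(1-\alpha)}\le \alpha\cdot 2\beta+(1-\alpha)\,|\xi|^2\le 2\beta+|\xi|^2,
\end{align*}
which gives $(2\beta+|\xi|^2)^{-1}\le (2\beta)^{-\alpha}|\xi|^{-2(1-\alpha)}$ on the high-frequency region. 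On the low-frequency region, for $\beta\ge 1/2$ (automatic for $\gamma$ large) one has $2\beta+|\xi|^2\ge 2\beta\ge (2\beta)^{\alpha}$. Combining both estimates:
\begin{align*}
\Upsilon(2\beta)\le \frac{(2\pi)^{-d}}{(2\beta)^{\alpha}}\left[\int_{|\xi|\le 1}\hat f(\ud\xi)+\int_{|\xi|>1}\frac{\hat f(\ud\xi)}{|\xi|^{2(1-\alpha)}}\right]\le \frac{2^{1+\alpha}\,C}{(2\beta)^{\alpha}}=\frac{2C}{\beta^{\alpha}},
\end{align*}
where I have used $A+B\le 2\max(A,B)$ together with the definition of $C$ in \eqref{E:ConstC}.

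Given this, I would split the constraint $a\Upsilon(2\beta)+b/(2\beta^2)<1/(2\gamma)$ into two halves by demanding $a\Upsilon(2\beta)\le 2aC/\beta^{\alpha}<1/(4\gamma)$ and $b/(2\beta^2)<1/(4\gamma)$ separately. The first inequality holds iff $\beta>2^{3/\alpha}(aC\gamma)^{1/\alpha}$, and the second holds iff $\beta>\sqrt{2b\gamma}$. Taking $\beta$ slightly above $\max\!\bigl(2^{3/\alpha}(aC\gamma)^{1/\alpha},\sqrt{2b\gamma}\bigr)$ then fulfills the hypothesis of Lemma \ref{L:EstHt}, giving exactly the bound claimed in \eqref{E:Var}. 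The condition ``$\gamma$ large enough'' enters only to ensure that this $\beta$ also satisfies $2\beta\ge 1$, which is needed for the low-frequency estimate.

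There is no real obstacle here beyond bookkeeping; the only subtle point is the Young-inequality step, which is the mechanism converting the improved Dalang condition \eqref{E:Dalang+} into a quantitative $\beta^{-\alpha}$ decay of $\Upsilon(2\beta)$, and hence into the $\gamma^{1/\alpha}$ rate on the right-hand side of \eqref{E:Var}. Tracking the numerical constants carefully yields precisely the constant $C$ displayed in \eqref{E:ConstC}.
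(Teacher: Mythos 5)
Your proof is correct and follows essentially the same route as the paper: bound $\Upsilon(2\beta)$ by $O(\beta^{-\alpha})$ via a low/high-frequency split of the Fourier integral, then plug into Lemma \ref{L:EstHt} and split the resulting constraint into two halves. The only cosmetic difference is that the paper obtains the high-frequency pointwise bound by writing $(2\beta+|\xi|^2)^{-1}=(2\beta+|\xi|^2)^{-\alpha}(2\beta+|\xi|^2)^{-(1-\alpha)}\le(2\beta)^{-\alpha}|\xi|^{-2(1-\alpha)}$ directly from monotonicity, whereas you invoke Young's inequality $A^{\alpha}B^{1-\alpha}\le A+B$ to get the same estimate — the two are equivalent, and the constant tracking and the final reduction to $\beta>2^{3/\alpha}(aC\gamma)^{1/\alpha}$ and $\beta>\sqrt{2b\gamma}$ coincide with the paper's.
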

\begin{proof}
  Notice that for $\beta>0$,
  \begin{align*}
    \Upsilon(2\beta) & =(2\pi)^{-d}\int_{\R^d} \frac{1}{\left(2\beta+|\xi|^2\right)^{\alpha}} \frac{\hat{f}(\ud \xi)}{\left(2\beta+|\xi|^2\right)^{1-\alpha}} \\
                     & \le \frac{(2\pi)^{-d}}{(2\beta)^\alpha} \left(\int_{|\xi|\le 1} \frac{\hat{f}(\ud \xi)}{(2\beta)^{1-\alpha}} + \int_{|\xi|> 1} \frac{\hat{f}(\ud \xi)}{|\xi|^{2(1-\alpha)}}\right)
		      \le  C \left(\frac{1}{\beta}+\frac{1}{\beta^\alpha}\right),
  \end{align*}
  where the constant $C$ can be chosen as in \eqref{E:ConstC}. When $\gamma$ is large enough, we may
  assume that $\beta>1$. Hence, in light of \eqref{E:Var},
  \begin{align*}
    a \Upsilon\left(2\beta\right) + \frac{b}{2\beta^2} \le \frac{2aC}{\beta^\alpha} + \frac{b}{2\beta^2}.
  \end{align*}
  Therefore,
  \begin{align*}
    a \Upsilon\left(2\beta\right) + \frac{b}{2\beta^2} < \frac{1}{2\gamma}
    \quad & \Longleftarrow \quad \frac{2a C}{\beta^\alpha} + \frac{b}{2\beta^2} < \frac{1}{2\gamma}                                          \\
    \quad & \Longleftarrow \quad \frac{2a C}{\beta^\alpha} < \frac{1}{4\gamma} \quad \text{and} \quad \frac{b}{2\beta^2} < \frac{1}{4\gamma} \\
    \quad & \Longleftrightarrow \quad \beta> 2^{3/\alpha} \left(a C \gamma \right)^{1/\alpha} \quad \text{and} \quad \beta > \sqrt{2b\gamma}.
  \end{align*}
  This proves the corollary.
\end{proof}

\subsection{Moment bounds for rough initial data -- Proof of part (b) of Theorem \ref{T:Mom}} \label{SS:MomRough}

In this part, we extend the moment bounds obtained in \cite{chen.huang:19:comparison} to allow a
Lipschitz drift term.

\begin{proof}[Proof of Theorem \ref{T:Mom} (b)]
Taking the $p$-th norm on both sides of the mild form \eqref{E:Mild} with $p\ge 2$ and applying the
Minkowski inequality, we see that
\begin{align} \label{E:Normp}
  \Norm{u(t,x)}_p \le J_+(t,x) + \Lip_b \int_0^t\ud s\int_{\R^d} p_{t-s}(x-y) \left(\frac{|b(0)|}{L_{b}} + \Norm{u(s,y)}_p \right) \ud y + \Norm{I(t,x)}_p.
\end{align}
By the Burkholder-Davis-Gundy inequality (see also a similar argument in the step 1 of the proof of
Theorem 1.7 of \cite{chen.huang:19:comparison} on p. 1000), we see that
\begin{align*}
  \Norm{I(s,y)}_p^2 \le & 4p \Lip_{\sigma}^2 \int_0^s\iint_{\R^{2d}} p_{s-r}(y-z_1)p_{s-r}(y-z_2)f(z_1-z_2) \\
                        & \times
			\sqrt{2\left(\frac{\sigma(0)^2}{L_{\sigma}^2}+\Norm{u(r,z_1)}_p^2\right)}\sqrt{2\left(\frac{\sigma(0)^2}{L_{\sigma}^2}+\Norm{u(r,z_2)}_p^2\right)}\: \ud r \ud z_1 \ud z_2.
\end{align*}
Then by the sub-additivity of square root,
\begin{align} \label{E:MomentI}
  \begin{aligned}
  \Norm{I(t,x)}_p^2 \le & 8p \Lip_\sigma^2 \int_0^t\: \ud s\iint_{\R^{2d}} \ud y_1 \ud y_2\: p_{t-s}(x-y_1)p_{t-s}(x-y_2)f(y_1-y_2) \\
                        & \times \left(\frac{|\sigma(0)|}{L_{\sigma}}+\Norm{u(s,y_1)}_p\right)\left(\frac{|\sigma(0)|}{L_{\sigma}}+\Norm{u(s,y_2)}_p\right).
  \end{aligned}
\end{align}
By the Cauchy-Schwartz inequality applied to the $\ud t$ integral, the square of second term on the
right-hand side of \eqref{E:Normp} is bounded by
\begin{gather*}
  L_b^2\: t \int_0^t\ud s\left(\int_{\R^d} p_{t-s}(x-y) \left(\frac{|b(0)|}{L_b} + \Norm{u(s,y)}_p \right) \ud y \right)^2.
\end{gather*}
Hence, by raising both sides of \eqref{E:Normp} by a power two and recalling that the constant
$\tau$ is defined in \eqref{E:tau}, we obtain that
\begin{align*}
  \Norm{u(t,x)}_p^2 & \le 3 J_+^2(t,x) + 3 \Lip_b^2 t \int_0^t\ud s\left(\int_{\R^d} p_{t-s}(x-y) \left(\frac{|b(0)|}{L_b} + \Norm{u(s,y)}_p \right) \ud y \right)^2 \\
                    & \quad + 24 p\Lip_\sigma^2 \int_0^t \ud s\iint_{\R^{2d}} \ud y_1 \ud y_2\: p_{t-s}(x-y_1)p_{t-s}(x-y_2)f(y_1-y_2)                               \\
                    & \quad \times \left(\frac{|\sigma(0)|}{L_{\sigma}}+\Norm{u(s,y_1)}_p\right)\left(\frac{|\sigma(0)|}{L_{\sigma}}+\Norm{u(s,y_2)}_p\right)        \\
                    & \le 3 J_+^2(t,x) +  3 \int_0^t\ud s \iint_{\R^{2d}} \ud y_1 \ud y_2\: p_{t-s}(x-y_1)p_{t-s}(x-y_2)                                             \\
                    & \quad \times\left(8p\Lip_\sigma^2 f(y_1-y_2)+\Lip_b^2 t\right) \left(\tau+\Norm{u(s,y_1)}_p\right)\left(\tau+\Norm{u(s,y_2)}_p\right).
\end{align*}

Now apply the same arguments as those in the proof of Theorem 1.7 of \cite{chen.huang:19:comparison} with $k(t)$
replaced by $k_{8p\Lip_\sigma^2,\Lip_b^2}(t)$ to see that
\begin{align*}
  \Norm{u(t,x)}_p \le \left[\tau + \sqrt{3} \: J_+(t,x)\right] H_{8p\Lip_\sigma^2, \Lip_b^2}\left(t;1\right)^{1/2}.
\end{align*}
In particular, if $f$ satisfies the improved Dalang's condition  \eqref{E:Dalang+} for some $\alpha\in(0,1)$, then by
Corollary \ref{C:Growth}, for all $t>0$ and $x\in\R^d$,
\begin{align*}
  \Norm{u(t,x)}_p \le
  \sqrt{3} \left[\frac{|b(0)|}{L_b}\vee\frac{|\sigma(0)|}{L_{\sigma}} + J_+(t,x)\right] \exp\left(C t\max\left(p^{1/\alpha}\Lip_\sigma^{2/\alpha},\Lip_b\right)\right).
\end{align*}
This proves part (b) of Theorem \ref{T:Mom}.
\end{proof}

\subsection{Uniform moment bounds -- Proof of part (c) of Theorem \ref{T:Mom}} \label{SS:MomBdd}

\begin{proof}[Proof of Theorem \ref{T:Mom} (c)]
  Fix arbitrary $T>0$ and recall that $\alpha\in(0,1)$ as in \eqref{E:Dalang+}. The proof relies on
  the factorization lemma (see, e.g., Section 5.3.1 of \cite{da-prato.zabczyk:14:stochastic}), which
  says that
  \begin{align} \label{E:Factorize}
    u(t,x) = (p_t*u_0)(x) + \Psi(t,x) +\Phi(t,x),
  \end{align}
  where
  \begin{align*}
    \Phi(t,x) = & \frac{\sin(\pi \alpha/2)}{\pi} \int_0^t\int_{\R^d} (t-r)^{-1+\alpha/2}p_{t-r}(x-z) Y(r,z)\ud z \ud r \quad \text{with} \\
    Y(r,z) =    & \int_0^r \int_{\R^d} (r-s)^{-\alpha/2} p_{r-s}(z-y)\sigma(u(s,y))W(\ud s,\ud y)
  \end{align*}
  and
  \begin{align*}
    \Psi(t,x) = & \frac{\sin(\pi \alpha/2)}{\pi} \int_0^t \int_{\R^d} (t-r)^{-1+\alpha/2}p_{t-r}(x-z) B(r,z)\ud z \ud r\quad \text{with} \\
    B(t,x) =    & \int_0^t \int_{\R^d} (r-s)^{-\alpha/2} p_{r-s}(z-y)b\left(u(s,y)\right) \ud s\ud y.
  \end{align*}
  It is clear that
  \begin{align*}
    \sup_{(t,x)\in[0,T]\times\R^d}|(p_t*u_0)(x)|^p \le \Norm{u_0}_{L^\infty(\R^d)}^p.
  \end{align*}

  {\noindent\bf Step 1.~} In this step, we will show that
  \begin{align} \label{E:CasePhi}
    \E\left(\sup_{(t,x)\in [0,T]\times\R^d} |\Phi(t,x)|^p\right)
    & \le C \Norm{u_0}_{L^p(\R^d)}^p L_{\sigma}^p \exp\left(CTp\max\left(L_{b},\: p^{1/\alpha}L_{\sigma}^{2/\alpha}\right)\right).
  \end{align}
  Let $p$ and $q$ be a conjugate pair on positive numbers, i.e., $1/p + 1/q=1$, whose values will be
  determined below. By H\"older's inequality, we see that
  \begin{align*}
    | \Phi(t,x) |
    & \le \frac{\sin(\pi \alpha/2)}{\pi} \int_0^t (t-r)^{-1+\alpha/2}\Norm{p_{t-r}(x-\cdot)}_{L^q(\R^d)} \Norm{Y(r,\cdot)}_{L^p(\R^d)}\: \ud r \\
    & \le C \int_0^t (t-r)^{-1+\alpha/2-(1-1/q)d/2} \Norm{Y(r,\cdot)}_{L^p_\rho(\R^d)}\: \ud r\\
    & \leq C \left(\int_0^t (t-r)^{(-1+\alpha/2)q-(q-1)d/2}\ud r\right)^{1/q} \left(\int_0^t \Norm{Y(r,\cdot)}_{L^p(\R^d)}^p\: \ud r\right)^{1/p},
  \end{align*}
  where we have used the fact that $\Norm{p_{t-r}(x-\cdot)}_{L^q(\R^d)}^q\le C (t-r)^{-d(q-1)/2}$ in
  the second inequality. Hence, since
  \begin{align*}
    \left(-1+\alpha/2\right)q-\left(q-1\right)d/2 > -1 \quad \Longleftrightarrow \quad
    p> (2+d)/\alpha,
  \end{align*}
  we have
  \begin{align*}
    \E\left(\sup_{(t,x)\in [0,T]\times\R^d} |\Phi(t,x)|^p\right)
      & \le C_T \int_0^t \E\left(\Norm{Y(r,\cdot)}_{L^p(\R^d)}^p\right)\: \ud r \\
      & = C_T \int_0^t\ud r\int_{\R^d}\ud z\: \E\left(\left|Y(r,z)\right|^p\right).
  \end{align*}
  Notice that
  \begin{align*}
   \Norm{Y(r,z)}_p^2 \le L_{\sigma}^2
      \int_0^r\ud s \iint_{\R^{2d}}\ud y\ud y'\: (r-s)^{-\alpha} f(y-y')
           p_{r-s}(z-y)  \Norm{u(s,y)}_p \\
    \times p_{r-s}(z-y') \Norm{u(s,y')}_p.
  \end{align*}
  Since $b(0) = \sigma(0)=0$, by \eqref{E:MomRough},
  \begin{align*}
    \Norm{u(s,y)}_p \le C \exp\left( CT\max\left(L_{b},\: p^{1/\alpha}L_{\sigma}^{2/\alpha}\right) \right) J_+(s,y).
  \end{align*}
  Combining the above three bounds shows that
  \begin{gather*}
    \E\left(\sup_{(t,x)\in [0,T]\times\R^d} |\Phi(t,x)|^p\right) \le C \exp\left(
    CTp\max\left(L_{b},\: p^{1/\alpha}L_{\sigma}^{2/\alpha}\right) \right) \int_0^t\ud r\int_{\R^d}
    \ud z\: I^{p/2}(r,z)
  \end{gather*}
  with
  \begin{align*}
    I(r,z):=\int_0^r\ud s \iint_{\R^{2d}}\ud y\ud y'\: (r-s)^{-\alpha} p_{r-s}(z-y) J_+(s,y) f(y-y') p_{r-s}(z-y') J_+(s,y').
  \end{align*}
  By the same arguments as the proof of Theorem 1.8 of \cite{chen.huang:19:comparison} (see, in
  particular, the bound for $I_{1,1}(t,x,x')$ on p. 1006 {\it ibid.}), we see that
  \begin{align*}
    I(r,z) & \le J_+^2(r,z) \int_0^r\int_{\R^d} e^{-\frac{(r-s)s|\xi|^2}{r}}(r-s)^{-\alpha}\hat{f}(\xi)\ud\xi \ud s
             \le C J_+^2(r,z) \int_{\R^d} \frac{\hat{f}(\xi)\ud \xi}{(1+|\xi|^2)^{1-\alpha}}.
  \end{align*}
  By H\"older's inequality, we see that
  \begin{align*}
    \int_0^T\ud r\int_{\R^d} \ud z\: J_+^p(r,z)
    \le \int_0^T\ud r \int_{\R^d} \ud x \: p_{2r}(x-z) \int_{\R^d}\ud z |u_0(z)|^p
    = T \Norm{u_0}_{L^p(\R^d)}^p.
  \end{align*}
  Therefore,
  \begin{align*}
    \E\left(\sup_{(t,x)\in [0,T]\times\R^d} |\Phi(t,x)|^p\right)
    & \le C L_{\sigma}^p e^{CTp\max\left(L_{b},\: p^{1/\alpha}L_{\sigma}^{2/\alpha}\right)} \int_0^T\ud r\int_{\R^d} \ud z\: J_+^p(r,z).
  \end{align*}
  Combining the last two inequalities proves \eqref{E:CasePhi}. \bigskip

  {\noindent\bf Step 2.~} In this step, we will show that
  \begin{align} \label{E:CasePsi}
    \E\left(\sup_{(t,x)\in [0,T]\times\R^d} |\Psi(t,x)|^p\right)
    & \le C \Norm{u_0}_{L^p(\R^d)}^p L_b^p \exp\left(CTp\max\left(L_{b},\: p^{1/\alpha}L_{\sigma}^{2/\alpha}\right)\right).
  \end{align}
  By the same arguments as in Step 1, we see that
  \begin{align*}
    \E\left(\sup_{(t,x)\in [0,T]\times\R^d} |\Phi(t,x)|^p\right)
      & = C_T \int_0^t\ud r\int_{\R^d}\ud z\: \E\left(\left|B(r,z)\right|^p\right).
  \end{align*}
  Notice that
  \begin{align*}
    \Norm{B(r,z)}_p
    & \le L_b \int_0^r \int_{\R^d} (r-s)^{-\alpha/2} p_{r-s}(z-y)\Norm{u(s,y)}_p \ud s\ud y                                                                             \\
    & \le C L_b \exp\left( CT\max\left(L_{b},\: p^{1/\alpha}L_{\sigma}^{2/\alpha}\right) \right) \int_0^r \int_{\R^d} (r-s)^{-\alpha/2} p_{r-s}(z-y)J_+(s,y) \ud s\ud y \\
    & \le C L_b \exp\left( CT\max\left(L_{b},\: p^{1/\alpha}L_{\sigma}^{2/\alpha}\right) \right) J_+\left(r,z\right) \int_0^r (r-s)^{-\alpha/2} \ud s,
  \end{align*}
  from which we deduce \eqref{E:CasePsi}. This proves part (c) of Theorem \ref{T:Mom}.
\end{proof}

\subsection{H\"older regularity -- Proof of Corollary \ref{C:Holder}} \label{SS:Holder}

\begin{proof}[Proof of Corollary \ref{C:Holder}]
  Denote the last two parts of right-hand side of \eqref{E:Mild} by $B(t,x)$ and $I(t,x)$. One can
  use the same arguments as those in the proof of Theorem 1.8 of \cite{chen.huang:19:comparison},
  but with the slightly different moment formula \eqref{E:MomRough}, to show that $I \in C^{\alpha
  /2-, \, \alpha-}\left((0,\infty)\times\R^d\right)$. It remains to show that $B \in C^{\alpha /2-,
  \, \alpha-}\left((0,\infty)\times\R^d\right)$. Now choose and fix arbitrary $n>1$ and $p>2$. For
  any $(t,x)$, $(t',x')\in [\sfrac{1}{n},n]\times\R^d$ with $t'>t$, an application of the Minkowski
  inequality shows that
  \begin{align*}
   & \Norm{B(t,x) - B(t',x')}_p  \le C L_b \left(I_1(t,x,x')+ I_2(t,t',x') + I_3(t,t',x') \right), \quad \text{with}    \\
   & \qquad I_1(t,x,x')   = \int_0^t \int_{\R^d} \left|p_{t-s}(x-y) - p_{t-s}(x'-y)\right|  \Norm{u(s,y)}_p \ud s\ud y, \\
   & \qquad I_2(t,t',x')  = \int_0^t \int_{\R^d} \left|p_{t-s}(x'-y) - p_{t'-s}(x'-y)\right| \Norm{u(s,y)}_p \ud s\ud y, \\
   & \qquad I_3(t,t',x')  = \int_t^{t'} \int_{\R^d} p_{t'-s}(x'-y) \Norm{u(s,y)}_p \ud s\ud y.
  \end{align*}
  By the moment formula \eqref{E:MomRough} and by setting $\mu(\ud z) := |u_0|(\ud z) + \tau \ud z$,
  we see that
  \begin{align*}
    I_1(t,x,x')  & \le C\int_0^t\ud s \int_{\R^d}\ud y\int_{\R^d} \mu(\ud z)\: \left|p_{t-s}(x-y) - p_{t-s}(x'-y)\right|p_{s}(y-z), \\
    I_2(t,t',x') & \le C\int_0^t\ud s \int_{\R^d}\ud y\int_{\R^d} \mu(\ud z)\:\left|p_{t-s}(x'-y) - p_{t'-s}(x'-y)\right|p_{s}(y-z), \\
    I_3(t,t',x') & \le C\int_t^{t'}\ud s \int_{\R^d}\ud y\int_{\R^d} \mu(\ud z)\:p_{t'-s}(x'-y) p_{s}(y-z).
  \end{align*}
  It is clear that $\mu$ is a rough initial condition, i.e., condition \eqref{E:InitData} is
  satisfied for $\mu$. Denote $ J_0(t,x)= (p_t*\mu)(x)$. It is straightforward to see that
  $I_3(t,t',x') \le C(t'-t)J_0\left(t',x'\right)$. As for $I_1$ and $I_2$, for any $\alpha\in
  (0,1)$, by Lemma 3.1 of \cite{chen.huang:19:comparison}, we have that
  \begin{align*}
    I_1(t,x,x')
    & \le C|x-x'|^{\alpha}\int_0^t\frac{\ud s }{(t-s)^{\alpha/2}}\int_{\R^d}\ud y\int_{\R^d} \mu(\ud z)\:\left[p_{2(t-s)}(x-y) + p_{2(t-s)}(x'-y)\right]p_{2s}(y-z), \\
    & = C |x-x'|^{\alpha} t^{1-\alpha/2} \left(J_0(2t,x) + J_0(2t,x')\right),
  \end{align*}
  and similarly,
  \begin{align*}
    I_2(t,t',x')
    & \le C(t'-t)^{\alpha/2}\int_0^t\frac{\ud s }{(t'-s)^{\alpha/2}}\int_{\R^d}\ud y\int_{\R^d} \mu(\ud z)\: p_{4(t'-s)}(x'-y)p_{4s}(y-z), \\
    & \le C(t'-t)^{\alpha/2}J_0\left(4t,x'\right).
  \end{align*}
  Combining the above bounds proves Corollary \ref{C:Holder}.
\end{proof}

\section{Proof of Theorem \ref{T:Main}} \label{S:Main}

\begin{proof}[Proof of Theorem \ref{T:Main}]
  For $N\ge 1$, let us consider the truncated stochastic heat equation:
  \begin{equation}\label{E:trunSHE}
    \begin{aligned}
      &  & u_N(t,x) = \left(p_t*u_0\right)(x) + \int_0^t \int_{\R^d} p_{t-s}(x-y) b_N(u_N(s,y)) \ud y \ud s \\
      &  & + \int_0^t \int_{\R^d} p_{t-s}(x-y)\sigma_N(u_N(s,y))W(\ud s, \ud y)\,,
    \end{aligned}
  \end{equation}
  where
  \begin{align} \label{E:Trun}
    \sigma_N(x) = \sigma\left(\left(1\wedge \frac{N}{|x|}\right) x\right) \quad \text{and} \quad
    b_N(x)      = b\left(\left(1\wedge \frac{N}{|x|}\right) x\right).
  \end{align}
  Recall that $L_{b_N}$ and $L_{\sigma_N}$ denote the growth rate; see \eqref{E:Lb}. According to
  Theorem 1.1 of \cite{huang:17:on}, there exists a unique solution $\{u_N(t,x): t>0,x\in\R^d\}$ to
  \eqref{E:trunSHE}. In the following, we will use $C$ to denote a generic constant that may change
  its value at each appearance, does not depend on $(N,t,x,\epsilon)$, but may depend on $(p,
  \alpha)$. \bigskip

  {\noindent \bf Step 1.~} In this step, we will prove (a). For any $T>0$ fixed, consider the
  following stopping time
  \begin{align*}
    \tau_N := \inf \left\{ t> 0: \sup_{x\in \R^d} |u_N(t,x)|\geq N\right\}\wedge T\,.
  \end{align*}
  Noticing that for all $M\ge N$, we have that $\tau_N \le \tau_M$ and
  \begin{align*}
    u_N(t,x) = u_M(t,x) \quad
    \text{a.s. on $(t,x)\in \left[0,\tau_N\right)\times \R^d$},
  \end{align*}
  we can construct the solution $u(t,x)$ via
  \begin{align} \label{E:uuN}
    u(t,x) = u_N(t,x)\,, \quad
    \text{for all $N\ge 1$ and $(t,x)\in \left[0,\tau_N\right)\times \R^d$}.
  \end{align}
  From the definition, it is clear that on $0 \leq t \leq \tau_N$,
  \begin{align*}
    b_N(u_N(t,x))      = b(u_N(t,x))      = b(u(t,x)) \quad \text{and} \quad
    \sigma_N(u_N(t,x)) = \sigma(u_N(t,x)) = \sigma(u(t,x)).
  \end{align*}
  By the Chebyshev inequality and the moment formula \eqref{E:MomBdd},
  \begin{gather}
    \mathbb{P}\left( 0 \le \tau_N<T \right) = \mathbb{P} \left(\sup_{(t,x) \in \left[0,T\right] \times \R^d} |u_N(t,x)| \geq N\right) \le \frac{1}{N^p} \E \left(\sup_{(t,x) \in \left[0,T\right] \times \R^d} |u_N(t,x)|^p\right) \notag \\
    \le \frac{C}{N^p}\left(\Norm{u_0}_{L^\infty}^p + C \Norm{u_0}_{L^p}^p \left(L_{b_N}+L_{\sigma_N}\right)^p \exp\left(C p T \max\left(L_{b_N},\: p^{1/\alpha}L_{\sigma_N}^{2/\alpha}\right)\right)\right).
    \label{E:TauNT}
  \end{gather}
  The sub-critical conditions in \eqref{E:subCrtcl} implies that
  \begin{equation*}
    L_{b_N} = o\left(\log N\right)  \quad \text{and} \quad
    L_{\sigma_N} = o\left(\left(\log N\right)^{\alpha/2}\right),
  \end{equation*}
  which ensure that above probability in \eqref{E:TauNT} goes to zero as $N\to\infty$. Therefore, by
  sending $N\to\infty$, we see that $u(t,x)$ is well defined on $\left(0,T\right]\times \R^d$. The
  uniqueness is inherited from the uniqueness of $u_N(t,x)$ in \eqref{E:trunSHE}. \bigskip

  {\noindent \bf Step 2.~} Now we prove part (b), the proof of which is similar to that of part (a).
  Fix an arbitrary $T_0>0$. Denote
  \begin{align*}
    \tau_N := \inf \left\{ t> 0: \sup_{x\in \R^d} |u_N(t,x)|\geq N\right\}\wedge T_0\,.
  \end{align*}
  We claim that
  \begin{align} \label{E:tau_1}
    \lim_{N \to \infty} \mathbb{P}\left( 0 \le \tau_N<T\right) =0\,,
    \quad \text{for some non-random constant $T>0$.}
  \end{align}
  Indeed, for all $\epsilon> 0$, by replacing $T$ by $\epsilon$ in \eqref{E:TauNT}, we see that
  \begin{align} \label{E:TauNT-b}
    \mathbb{P}\left( 0 \le \tau_N<\epsilon\right)
    \le & \frac{C}{N^p}\left(\Norm{u_0}_{L^\infty}^p + C \Norm{u_0}_{L^p}^p
      \left(L_{b_N}+L_{\sigma_N}\right)^p
    \exp\left(C p \epsilon \max\left(L_{b_N},\: p^{1/\alpha}L_{\sigma_N}^{2/\alpha}\right)\right)\right).
  \end{align}
  By the critical conditions in \eqref{E:Crtcl}, for some $C>0$,
  \begin{align*}
    L_{b_N} \le C \log N \quad \text{and} \quad
    L_{\sigma_N} \le C (\log N)^{\alpha/2}.
  \end{align*}
  Hence, when $\epsilon$ is small enough, by plugging the above constants into \eqref{E:TauNT-b}, we
  see that the probability in \eqref{E:TauNT-b} goes to zero as $N\to \infty$. Therefore, by
  choosing any positive constant $T \in \left(0,\epsilon\right)$, we prove the claim
  \eqref{E:tau_1}. The uniqueness is proved in the same way as the proof of part (a). \bigskip

  {\noindent \bf Step 3.~} Finally, the H\"older continuity of the solution of $u$ inherits that of
  $u_N$ thanks to their relation given in \eqref{E:uuN}, where the H\"older regularity of $u_N$ with
  given exponents is proved in Corollary \ref{C:Holder}. This completes the proof of Theorem
  \ref{T:Main}.
\end{proof}

\section*{Acknowledgement}
J. Huang thanks Mohammud Foondun for pointing out the reference \cite{salins:21:global} when J. H.
presented this paper at a conference.


\printbibliography[title={References}]

\end{document}
